\tikzset{
    >=stealth,
    every picture/.style={thick},
    graphs/every graph/.style={empty nodes},
}
\tikzstyle{vertex}=[
\tikzstyle{printersafe}=[decoration={snake,amplitude=0pt}]
\newcommand{\Hom}{\operatorname{Hom}}
\renewcommand{\qq}{\mathbb{Q}}
\newcommand{\zz}{\mathbb{Z}}
\def\O#1.{\mathcal {O}_{#1}}			
\def\pr #1.{\mathbb P^{#1}}				
\def\af #1.{\mathbb A^{#1}}			
\def\ses#1.#2.#3.{0\to #1\to #2\to #3 \to 0}	
\def\xrar#1.{\xrightarrow{#1}}			
\def\K#1.{K_{#1}}						
\def\bA#1.{\mathbf{A}_{#1}}			
\def\bM#1.{\mathbf{M}_{#1}}				
\def\bL#1.{\mathbf{L}_{#1}}				
\def\bB#1.{\mathbf{B}_{#1}}				
\def\bK#1.{\mathbf{K}_{#1}}			
\def\subs#1.{_{#1}}					
\def\sups#1.{^{#1}}
  \newtheorem{introthm}{Theorem}
  \newtheorem{introcor}{Corollary}
  \newtheorem{introprop}{Proposition}
  \newtheorem{theorem}{Theorem}[section]
  \newtheorem{definition}[theorem]{Definition}
\theoremstyle{remark}
\numberwithin{equation}{section}
\begin{document}

\title[Extracting non-canonical places]{Extracting non-canonical places}

\author[J.~Moraga]{Joaqu\'in Moraga}
\address{Department of Mathematics, Princeton University, Fine Hall, Washington Road, Princeton, NJ 08544-1000, USA
}
\email{jmoraga@princeton.edu}

\subjclass[2010]{Primary 14E30, 
Secondary 14B05.}
\maketitle

\begin{abstract}
Let $(X,B)$ be a log canonical pair and $\mathcal{V}$ be a finite set of divisorial valuations with log discrepancy in $[0,1)$.
We prove that there exists a projective birational morphism $\pi \colon Y\rightarrow X$ so that the exceptional divisors are $\qq$-Cartier and correspond to elements of $\mathcal{V}$.
We study how two such models are related.
Moreover, we provide an application to the study of deformations of log canonical singularities.
\end{abstract}

\setcounter{tocdepth}{1} 
\tableofcontents

\section{Introduction}

In this article, we study the extraction of log canonical places from a log canonical pair $(X,B)$.
In particular, we are interested in studying models that extract a prescribed finite set of divisorial valuations with log discrepancy in $[0,1)$.
Our proof is an inductive argument in which we extract such valuations one-by-one. The main techniques involved are the minimal model program for $\qq$-factorial dlt pairs over a birational base~\cite{BCHM10} and the existence of certain good minimal models due to Birkar, Hacon and Xu~\cite{Bir12,HX13}.
We will prove the following result.

\begin{introthm}\label{introthm:extraction}
Let $(X,B)$ be a log canonical pair with $B$ a $\qq$-divisor. Let  $\mathcal{V}$ be a finite set of divisorial valuations with log discrepancy in $[0,1)$.
Then, there exists a projective birational morphism $\pi \colon Y\rightarrow X$ satisfying the following conditions:
\begin{enumerate}
    \item The exceptional locus ${\rm Ex}(\pi)$ is purely divisorial,
    \item the exceptional divisors are $\qq$-Cartier, 
    \item the exceptional divisors correspond to elements of $\mathcal{V}$, and
    \item there exists an anti-effective divisor supported on the exceptional locus which is ample over $X$.
\end{enumerate}
In particular, we may find a boundary divisor $B_Y$ on $Y$ satisfying the following conditions:
\begin{enumerate}
 \setcounter{enumi}{4}
    \item $B_Y$ and $\pi^{-1}_* B$ only differ at exceptional divisors of $\pi$,
    \item $(Y,B_Y)$ is log canonical, 
    \item $K_Y+B_Y$ is ample over $X$, and
    \item no log canonical center of $(Y,B_Y)$ is contained in ${\rm Ex}(\pi)$.
\end{enumerate}
\end{introthm}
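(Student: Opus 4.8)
The plan is to construct $\pi$ by induction on $|\mathcal V|$, extracting the valuations one at a time as the introduction suggests, and to keep every intermediate step \emph{crepant} over $X$ so that the log discrepancies of the not-yet-extracted valuations are preserved exactly and therefore remain in $[0,1)$; only at the very end do I perturb the boundary to upgrade the crepant model to one satisfying the ampleness conditions (4) and (7). At the start I replace $(X,B)$ by a $\qq$-factorial dlt modification, which is crepant over $X$, $\qq$-factorial, and extracts precisely the log canonical places of $(X,B)$. This disposes of the valuations of $\mathcal V$ with log discrepancy $0$ (they are log canonical places, already produced by the dlt modification) and reduces the remaining task to extracting valuations of \emph{positive} log discrepancy over a $\qq$-factorial dlt pair, which is the setting in which auxiliary exceptional divisors can actually be contracted.

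For the single-valuation step, let $v$ have centre on the dlt model $X'$ and log discrepancy $a=a(v;X',B')=a(v;X,B)\in(0,1)$. On a log resolution $f\colon W\to X'$ extracting $v$ as a divisor $E$, with remaining $f$-exceptional divisors $F_1,\dots,F_m$, I choose the boundary $\Delta=f^{-1}_*B'+(1-a)E+\sum_j F_j$, so that
\[
K_W+\Delta \;=\; f^*(K_{X'}+B')+\sum_j a(F_j;X',B')\,F_j,
\]
which is crepant along $E$ and effective and $f$-exceptional elsewhere. I then run the $(K_W+\Delta)$-minimal model program over $X'$, using the minimal model program for $\qq$-factorial dlt pairs over a birational base together with the existence of good minimal models of Birkar and of Hacon--Xu to guarantee termination and the existence of the relative good/ample model. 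The negativity lemma forces the auxiliary divisors to be contracted while $E$ survives crepantly, and iterating over $\mathcal V$ yields a $\qq$-factorial $\pi_0\colon Y_0\to X$, crepant over $X$, whose exceptional locus consists exactly of the divisors $E_i$ realizing the elements of $\mathcal V$.

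It remains to achieve the ampleness and negativity conditions. Write $K_{Y_0}+B_{Y_0}=\pi_0^*(K_X+B)$ for the crepant boundary, so $K_{Y_0}+B_{Y_0}\equiv_X 0$. For small generic $\epsilon_i>0$ I run the $\bigl(-\sum_i\epsilon_i E_i\bigr)$-minimal model program over $X$; since $K_{Y_0}+B_{Y_0}\equiv_X 0$, every extremal ray contracted is $(K_{Y_0}+B_{Y_0})$-trivial, so each step is a flop, an isomorphism in codimension one, and no extracted divisor is lost. By the good minimal model results this terminates on a $\qq$-factorial model $\pi\colon Y\to X$ extracting exactly $\mathcal V$ on which $-\sum_i\epsilon_i E_i$ is $\pi$-ample. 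This is the anti-effective $\pi$-ample divisor of (4), which in turn forces ${\rm Ex}(\pi)$ to be purely divisorial, giving (1); $\qq$-factoriality gives (2) and the construction gives (3). Finally I set $B_Y=\pi^{-1}_*B+\sum_i(1-a_i-\epsilon_i)E_i$, whence $K_Y+B_Y=\pi^*(K_X+B)-\sum_i\epsilon_i E_i$ is $\pi$-ample, giving (7); the pair $(Y,B_Y)$ is log canonical because $B_Y$ is dominated by the crepant boundary, giving (6); each $E_i$ occurs in $B_Y$ with coefficient $1-a_i-\epsilon_i<1$, so no $E_i$, and after inspecting the lower-dimensional strata no log canonical centre at all, is contained in ${\rm Ex}(\pi)$, giving (8); and (5) is immediate from the definition of $B_Y$.

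I expect the main obstacle to be the single-valuation extraction in the genuinely log canonical (non-klt) range: over a pair that is not klt one cannot invoke \cite{BCHM10} directly, and the delicate point is to contract the auxiliary exceptional divisors that sit over the log canonical centres of $(X,B)$ and are therefore themselves log canonical places, which no effective boundary can render $(K_W+\Delta)$-positive. This is precisely why I first pass to a $\qq$-factorial dlt modification and then argue with the minimal model program for dlt pairs over the birational base $X$, where the good minimal model theorems of \cite{Bir12,HX13} supply the termination and the relative ample model that \cite{BCHM10} provides in the klt case. A secondary, bookkeeping-type difficulty is guaranteeing throughout the induction that extracting one valuation leaves the others with log discrepancy still in $[0,1)$; performing every step crepantly over $X$ until the single final perturbation is exactly what makes this automatic.
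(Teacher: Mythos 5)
There is a genuine gap at the heart of your single-valuation step: it never contracts the auxiliary \emph{log canonical} places, and as set up it cannot. On your log resolution $f\colon W\to X'$, the exceptional divisors $F_j$ lying over the non-klt locus of the dlt pair $(X',B')$ typically satisfy $a(F_j;X',B')=0$, so they enter $G=\sum_j a(F_j;X',B')F_j$ with coefficient zero; the $(K_W+\Delta)$-MMP is trivial along them, and the negativity-lemma argument on a relative minimal model only yields $G'=0$, i.e.\ it kills exactly the $F_j$ with \emph{positive} log discrepancy. The lc places can survive every step, so ``iterating over $\mathcal{V}$'' can leave a model extracting strictly more divisors than $\mathcal{V}$, violating (3). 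You flag precisely this obstacle in your closing paragraph, but the remedy you offer --- dlt modification plus the dlt MMP over the birational base with \cite{Bir12,HX13} --- supplies termination and relative ample models, not a mechanism for contracting divisors on which your MMP divisor is trivial; as you note yourself, no effective boundary makes them positive, and this is exactly why the paper says controlling the extra divisors of a dlt modification ``seems impossible.'' The paper's device, which is the idea you are missing, is to perturb by the divisor to be \emph{kept} rather than the ones to be removed: writing $\pi_1^*(K_X+B)=K_{Y_1}+B_{Y_1}+E_{Y_1}\sim_{\qq,X}0$ on a dlt model extracting everything, it runs the MMP for $K_{Y_1}+B_{Y_1}+E_{Y_1}-\epsilon_1 F\sim_{\qq,X}-\epsilon_1 F$ over $X$ (good minimal model by \cite[Theorem 1.6]{HX13}); monotonicity of log discrepancies shows $F$ is not contracted, and the relative \emph{ample model} $X_1$ of $-\epsilon_1 F$ contracts every other exceptional divisor, lc places included, since a surviving exceptional divisor $D\neq F$ would be covered by curves $C$ contracted over $X$ with $F\cdot C\geq 0$, contradicting ampleness of $-\epsilon_1 F$. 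Extraction is then performed one valuation at a time over successive bases $X\leftarrow X_1\leftarrow\cdots\leftarrow X_k$, and this tower is what makes each extracted divisor individually $\qq$-Cartier and each $-F_{X_i}$ ample over $X_{i-1}$, giving (4) after choosing $1\gg\epsilon_1\gg\cdots\gg\epsilon_k>0$.

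Your final perturbation step also rests on faulty reasoning, even granting a $Y_0$ extracting exactly $\mathcal{V}$. Since $K_{Y_0}+B_{Y_0}\sim_{\qq,X}0$, every step of the $\bigl(-\sum_i\epsilon_iE_i\bigr)$-MMP is indeed $(K_{Y_0}+B_{Y_0})$-trivial, but $(K+B)$-trivial extremal contractions can perfectly well be divisorial (crepant divisorial contractions exist), so ``each step is a flop, an isomorphism in codimension one'' does not follow; the correct argument that no $E_i$ is lost is the discrepancy computation $\alpha>\alpha+\epsilon$ that the paper runs, and with several divisors perturbed simultaneously it needs care, because after contracting $E_i$ the pushforward pair is no longer crepant to $(X,B)$ and the contributions of the other $\epsilon_jE_j$ pollute the comparison --- another reason the paper extracts one divisor at a time. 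Moreover, a good minimal model only makes $-\sum_i\epsilon_iE_i$ \emph{semiample} over $X$; the ampleness in (4) and (7) requires passing to the relative ample model, which can be a further small contraction destroying $\qq$-factoriality, after which (2) --- each exceptional divisor individually $\qq$-Cartier --- no longer follows (only the ample combination is automatically $\qq$-Cartier). Finally, (8) is not settled by ``inspecting the lower-dimensional strata'': the paper obtains it because, at each stage, the lc centers of the $\epsilon$-perturbed pair are precisely the strict transforms of lc centers of $(X,B)$, all centers contained in the new exceptional divisor having been destroyed by the perturbation.
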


It is well-known for the experts that we can extract all the valuations in $\mathcal{V}$ in a $\qq$-factorial dlt modification of $(X,B)$.
However, such a model would often extract more exceptional divisors and contain small components in the exceptional locus.
Controlling the number of exceptional divisors on the dlt modifications seems impossible.
Indeed, the log canonical places extracted by a dlt modification come from log canonical places extracted in a log resolution.
Hence to control the former, we would need to control the construction of log resolutions.
Instead, in this article, we will start from a dlt modification and contract the extra divisors
by running several MMP's over different bases.
Applying the above result, we can generalize the extraction of non-canonical places of klt pairs~\cite[Corollary 1.4.3]{BCHM10}.

\begin{introcor}\label{introcor:extraction}
Let $(X,B)$ be a log canonical pair with $B$ a $\qq$-divisor. Let $\mathcal{V}$ be a finite set of divisorial valuations with log discrepancy in $[0,1]$.
Furthermore, assume that no canonical center contains a log canonical center.
Then, there exists a projective birational morphism $\pi \colon Y\rightarrow X$ so that the exceptional divisors are $\qq$-Cartier and correspond to elements of $\mathcal{V}$.
\end{introcor}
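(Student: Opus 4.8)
The plan is to deduce this statement from Theorem~\ref{introthm:extraction} by enlarging the boundary so that every valuation in $\mathcal{V}$ acquires log discrepancy in $[0,1)$. First I would partition $\mathcal{V}=\mathcal{V}_{<1}\sqcup\mathcal{V}_{=1}$ according to whether the log discrepancy with respect to $(X,B)$ is strictly less than one or equal to one. For $E\in\mathcal{V}_{<1}$ there is nothing to arrange, so the heart of the argument is to perturb $(X,B)$ to a new log canonical pair $(X,\Delta)$ with $\Delta\ge B$ under which each $E\in\mathcal{V}_{=1}$ drops below log discrepancy one, while keeping every $E\in\mathcal{V}_{<1}$ inside $[0,1)$. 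This is exactly where the hypothesis that no canonical center contains a log canonical center enters.

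Next I would produce the perturbing divisor. For $E\in\mathcal{V}_{=1}$ the center $Z_E:=c_X(E)$ is a canonical center, so by hypothesis it contains no log canonical center of $(X,B)$; since each $Z_E$ is irreducible, the union $Z:=\bigcup_{E\in\mathcal{V}_{=1}}Z_E$ also contains no log canonical center. Fixing an ample Cartier divisor $H$ and $m\gg 0$, I would take $D$ to be a general member of the globally generated sheaf $\mathcal{I}_Z\otimes\mathcal{O}_X(mH)$. Such a $D$ is Cartier, its support contains $Z$, and by Bertini together with a choice of section avoiding the finitely many zero-dimensional log canonical centers lying outside $Z$, its support contains no log canonical center of $(X,B)$. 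Consequently $\lct(X,B;D)>0$, since adding a small multiple of a divisor through no log canonical center preserves log canonicity.

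Finally, fix a rational $0<\epsilon<\lct(X,B;D)$ and set $\Delta:=B+\epsilon D$, so that $(X,\Delta)$ is log canonical and $\Delta$ is a $\qq$-divisor. As valuations do not depend on the boundary, $a_E(X,\Delta)=a_E(X,B)-\epsilon\,\ord_E(D)$ for every $E$. For $E\in\mathcal{V}_{=1}$ we have $Z_E\subseteq\Supp(D)$, hence $\ord_E(D)>0$ and $a_E(X,\Delta)\in[0,1)$. For $E\in\mathcal{V}_{<1}$ the value only decreases, and it stays $\ge 0$ because $(X,\Delta)$ is log canonical; in particular the log canonical places among them (those with $a_E(X,B)=0$) are untouched, since their centers are log canonical centers and therefore lie outside $\Supp(D)$, forcing $\ord_E(D)=0$. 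Thus every element of $\mathcal{V}$ has log discrepancy in $[0,1)$ with respect to $(X,\Delta)$, and applying Theorem~\ref{introthm:extraction} to $(X,\Delta)$ and $\mathcal{V}$ yields the desired $\pi\colon Y\to X$ whose exceptional divisors are $\qq$-Cartier and correspond to the elements of $\mathcal{V}$. The main obstacle is the construction of $D$: one must invoke the hypothesis to guarantee positivity of the log canonical threshold and, simultaneously, that the perturbation does not eject any element of $\mathcal{V}_{<1}$ from the range $[0,1)$. Once $(X,\Delta)$ is in hand, the statement is a black-box application of the main theorem.
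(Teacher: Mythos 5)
Your proposal is correct and takes essentially the same route as the paper: perturb the boundary by a small multiple of a general ample divisor whose support contains the relevant canonical centers but no log canonical center, so that every valuation in $\mathcal{V}$ acquires log discrepancy in $[0,1)$, and then apply Theorem~\ref{introthm:extraction} as a black box. If anything, your version is slightly more careful than the paper's two-line argument, since you pass the divisor only through the centers of $\mathcal{V}_{=1}$ (rather than through \emph{all} canonical centers, of which there may be infinitely many) and you justify the positivity of the log canonical threshold explicitly.
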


It is natural to ask how to relate two such birational extractions.
Once we fix the boundary $B_Y$ such model is unique 
due to the uniqueness of ample models over the base.
However, it is interesting to understand how two models $Y_1 \rightarrow X$ and $Y_2 \rightarrow X$ satisfying 
$(1)$-$(3)$ are related.
In this direction, we can prove the following statement up to dimension three.

\begin{introprop}\label{introprop:comparison}
Let $(X,B)$ be a log canonical pair of dimension at most three with $B$ a $\qq$-divisor.
Let $\mathcal{V}$ be a finite set of divisorial valuations with log discrepancy in $[0,1)$.
Let $Y_1\rightarrow X$ and $Y_2\rightarrow X$ two birational models satisfying $(1)$-$(3)$ of Theorem~\ref{introthm:extraction}.
Then the small birational map $Y_1\dashrightarrow Y_2$ factors as  a sequence of flops over $X$ and possibly a small contraction over $X$.
In particular, if $\rho(Y_1/X)=\rho(Y_2/X)$, then the small birational map $Y_1\dashrightarrow Y_2$ factors as a sequence of flops over $X$.
\end{introprop}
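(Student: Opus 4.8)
The plan is to realize $Y_2$ as the ample model over $X$ of a movable divisor on $Y_1$, and to connect the two models by a minimal model program that is trivial with respect to the crepant log structure, so that every step is forced to be a flop and only the terminal contraction to the ample model can fail to be an isomorphism. Throughout I would work with the crepant pullbacks: since $(X,B)$ is log canonical and the valuations in $\mathcal V$ have log discrepancy in $[0,1)$, there are unique boundaries $B_1,B_2$ with $K_{Y_i}+B_i=\pi_i^*(K_X+B)$ and $\pi_{i*}B_i=B$; the pairs $(Y_i,B_i)$ are log canonical and $K_{Y_i}+B_i\equiv_X 0$.

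First I would record that $\phi\colon Y_1\drar Y_2$ is small. By $(1)$ and $(3)$ the prime $\pi_i$-exceptional divisors are exactly the divisors realizing the valuations of $\mathcal V$, so $Y_1$ and $Y_2$ carry the same set of prime divisors, namely the birational transforms of the prime divisors on $X$ together with the divisors of $\mathcal V$. Hence $\phi$ neither extracts nor contracts a divisor and is an isomorphism in codimension one. Next I would fix a divisor $A$ on $Y_2$ that is ample over $X$ and let $A_1$ be its birational transform on $Y_1$; for $A$ general and $0<\epsilon\ll 1$ the pair $(Y_1,B_1+\epsilon A_1)$ is log canonical. Because $\phi$ is small, the relative section rings of $A_1$ and of $A$ coincide, and since $A$ is ample on $Y_2$ the ample model over $X$ of $A_1$ is exactly $Y_2$.

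I would then run a $(K_{Y_1}+B_1)$-minimal model program over $X$ with scaling of $A_1$. As $K_{Y_1}+B_1\equiv_X 0$, at every scaling parameter $t$ the contracted extremal ray $R$ satisfies $(K_{Y_1}+B_1)\cdot R=0$ and $A_1\cdot R<0$, so each step is $(K_{Y_1}+B_1)$-trivial. Every such contraction must moreover be small, hence a flip and thus a $(K_{Y_1}+B_1)$-flop, because a divisorial contraction would destroy a prime divisor permanently, contradicting the fact that the eventual model still carries every prime divisor of $Y_1$ by smallness of $\phi$. In dimension at most three this program exists and terminates, yielding a sequence of flops $Y_1\drar Y_1'$ over $X$ with $A_1'$ nef over $X$. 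By uniqueness of ample models the ample model of $A_1'$ over $X$ is again $Y_2$, giving a birational morphism $Y_1'\rar Y_2$ over $X$ that contracts exactly the $A_1'$-trivial curves; since $Y_1'\drar Y_2$ is a composite of small maps it contracts no divisor, so $Y_1'\rar Y_2$ is a small contraction. This exhibits $Y_1\drar Y_2$ as a sequence of flops followed by a small contraction. Flops preserve the relative Picard number while a nontrivial small contraction strictly lowers it, so $\rho(Y_1/X)=\rho(Y_1'/X)\ge\rho(Y_2/X)$; when equality holds the contraction $Y_1'\rar Y_2$ is an isomorphism and $Y_1\drar Y_2$ is a pure sequence of flops.

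The hard part will be controlling the minimal model program itself. One must guarantee that it can be initiated from $Y_1$ and that it terminates, which is precisely where the hypothesis $\dim\le 3$ enters, through termination of flips and the existence of log canonical flips and flops, and where the possible failure of $\qq$-factoriality of the $Y_i$ must be dealt with, for instance by invoking the dimension-three log canonical minimal model program or by passing to a small $\qq$-factorialization. The subtle point is that the program should be run from the model admitting it, which is why the single small contraction is forced to appear on the $Y_2$ side and why the inequality $\rho(Y_1/X)\ge\rho(Y_2/X)$ emerges only as an output of the construction rather than as an a priori symmetry.
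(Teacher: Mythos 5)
Your overall architecture is the same as the paper's: realize $Y_2$ as the relative ample model of a perturbed log canonical divisor on $Y_1$, run an MMP over $X$ whose steps are trivial for the crepant pullback of $K_X+B$ (hence flops), rule out divisorial contractions because the terminal morphism onto $Y_2$ cannot re-extract a contracted divisor, and read off the Picard-rank statement from the comparison of minimal and ample models. However, your specific choice of auxiliary divisor creates two concrete gaps. First, conditions $(1)$--$(3)$ of Theorem~\ref{introthm:extraction} do \emph{not} make $Y_1$ $\mathbb{Q}$-factorial; they only make the exceptional divisors $\mathbb{Q}$-Cartier. So the strict transform $A_1$ of a general ample divisor $A$ on $Y_2$ need not be $\mathbb{Q}$-Cartier on $Y_1$, and the divisor $K_{Y_1}+B_1+\epsilon A_1$ you propose to run the MMP for may not even be defined as a $\mathbb{Q}$-Cartier class; your suggested escape via a small $\mathbb{Q}$-factorialization inserts extra small maps and then factors a map between auxiliary models rather than $Y_1\dashrightarrow Y_2$ itself. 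Second, even where $A_1$ makes sense, log canonicity of $(Y_1,B_1+\epsilon A_1)$ can fail for \emph{every} $\epsilon>0$: generality of $A$ keeps it off the log canonical centers of $(Y_2,B_2)$, but its strict transform can contain log canonical centers of $(Y_1,B_1)$ sitting over the indeterminacy (flopping) locus of $\phi^{-1}$, where generic position on $Y_2$ gives you no control; adding any positive multiple of such a divisor destroys log canonicity.

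The paper avoids both problems by perturbing downward rather than upward: it takes the boundary $B_{Y_2}$ produced in the proof of Theorem~\ref{introthm:extraction}, which differs from the crepant boundary $\Delta_{Y_2}$ only along the exceptional divisors (these are $\mathbb{Q}$-Cartier by condition $(2)$, so $K_{Y_2}+B_{Y_2}=\pi_2^*(K_X+B)-\sum_i\epsilon_iF_i$ is $\mathbb{Q}$-Cartier) and satisfies $B_{Y_2}\le\Delta_{Y_2}$, with $(Y_2,B_{Y_2})$ ample over $X$. Its strict transform $B_{Y_1}$ then satisfies $B_{Y_1}\le\Delta_{Y_1}$, so $(Y_1,B_{Y_1})$ is automatically log canonical with $\mathbb{Q}$-Cartier log canonical divisor, and its ample model over $X$ is exactly $(Y_2,B_{Y_2})$ by uniqueness of ample models. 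With that substitution the rest of your argument --- crepant-trivial MMP with scaling over $X$, smallness of every step, termination with a good minimal model in dimension at most three, the final small contraction to the ample model, and the observation that equality of relative Picard ranks forces that contraction to be an isomorphism --- coincides with the paper's proof.
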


Here, $\rho(Y/X)$ is the rank of the $\qq$-vector space generated by $\qq$-Cartier divisors on $Y$ modulo $\qq$-linear equivalence over $X$.
In higher dimensions, the above statement follows from the minimal model program for log canonical pairs over a birational morphism and the abundance conjecture.

It is an interesting question whether a birational morphism admits an anti-effective divisor supported in the exceptional locus which is ample over the base.
This is the case for instance if the birational morphism is a composition of blow-ups of centers of codimension at least two.
As an application of the main theorem, 
we may find such divisor on a small model of our birational map
whenever we extract log discrepancies in the interval $[0,1)$.

\begin{introcor}\label{introcor:existence-ample-anti-effective}
Let $(X,B)$ be a log canonical pair and $\pi \colon Y\rightarrow X$ be a birational morphism which only extract divisors with log discrepancy in $[0,1)$. 
Then, there exists a small morphism $Y\dashrightarrow Y'$ over $X$, so that $\pi'\colon Y'\rightarrow X$ admits an anti-effective divisor supported on ${\rm Ex}(\pi')$ which is ample over the base.
\end{introcor}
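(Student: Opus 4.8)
The plan is to reduce the statement directly to the main theorem. Let $\mathcal{V}$ denote the finite set of divisorial valuations corresponding to the prime divisors in ${\rm Ex}(\pi)$. Since $\pi$ is birational it contracts only finitely many divisors, and by hypothesis each of them has log discrepancy in $[0,1)$; hence $\mathcal{V}$ is a legitimate input for Theorem~\ref{introthm:extraction}.

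First I would apply Theorem~\ref{introthm:extraction} to the pair $(X,B)$ together with the set $\mathcal{V}$. This produces a projective birational morphism $\pi'\colon Y'\rightarrow X$ satisfying conditions $(1)$--$(4)$. In particular, by $(3)$ the exceptional divisors of $\pi'$ correspond precisely to the elements of $\mathcal{V}$, and by $(4)$ there is an anti-effective divisor supported on ${\rm Ex}(\pi')$ that is ample over $X$. Thus $Y'$ already carries the divisor demanded in the conclusion, and it remains only to check that the induced birational map $Y\dashrightarrow Y'$ over $X$ is small.

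The key observation is that $Y$ and $Y'$ extract exactly the same divisorial valuations. A prime divisor on either model is either the strict transform of a prime divisor on $X$ or a divisor exceptional over $X$; the exceptional ones correspond to $\mathcal{V}$ on both $Y$ and $Y'$ by construction. Consequently, if the birational map $\phi\colon Y\dashrightarrow Y'$ contracted a prime divisor $D$, then the valuation $v_D$ would have a divisorial center on $Y$ but a center of codimension at least two on $Y'$, contradicting that both models realize the same set of divisorial valuations. Applying the same argument to $\phi^{-1}$ shows that $\phi^{-1}$ contracts no divisor either. Therefore $\phi$ is an isomorphism in codimension one, i.e. a small birational map, and taking $Y'$ and $\pi'$ as above finishes the proof. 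Note that the possible small (codimension $\geq 2$) components of ${\rm Ex}(\pi)$ are irrelevant here, since smallness only concerns the codimension-one behaviour of $\phi$.

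I expect the main, and essentially only, subtlety to be the verification that $\phi$ is small, which hinges on property $(3)$ of Theorem~\ref{introthm:extraction}: the constructed model extracts $\mathcal{V}$ and \emph{nothing more}. Without this precise control---as opposed to, say, a $\qq$-factorial dlt modification, which would in general introduce additional exceptional divisors---the map $Y\dashrightarrow Y'$ could fail to be an isomorphism in codimension one, and the reduction would break down.
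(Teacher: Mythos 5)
Your proposal is correct and follows essentially the same route as the paper: take $\mathcal{V}$ to be the divisorial valuations extracted by $\pi$, apply Theorem~\ref{introthm:extraction}, and observe that the resulting model $Y'$ carries the desired anti-effective relatively ample divisor. In fact you go slightly further than the paper's one-line proof by explicitly verifying that $Y\dashrightarrow Y'$ is small (both models extract exactly the valuations in $\mathcal{V}$, so neither direction contracts a divisor), a point the paper leaves implicit.
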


Another straightforward application is the extraction of a unique  exceptional divisor $E$ over a log canonical germ whose normalization carries a log Calabi-Yau structure.
We remark that it is not expected that $E$ carries the structure of a slc pair, since in general, it may not be $S_2$ (see, e.g.,~\cite{Kol92}).
This an analog of the so-called plt blow-up for klt singularities~\cite{Xu14}.

\begin{introcor}\label{introcor:lcy-extraction}
Let $x\in(X,B)$ be a log canonical singularity, 
then there exists a birational morphism $\pi \colon Y\rightarrow X$ which extracts a unique divisor $E$ mapping onto $x$, so that $-E$ is ample over $X$, and $(Y,B_Y+E)$ is log canonical.
Here, $B_Y$ is the strict transform of $B$ on $Y$.
Moreover, if $E^\eta \rightarrow E$ is the normalization of $E$,
then there exists a boundary $B^\eta$ so that $(E^\eta,B^\eta)$ is a log Calabi-Yau pair.
\end{introcor}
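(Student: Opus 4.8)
The plan is to derive this from Theorem~\ref{introthm:extraction} applied to a singleton set of valuations, and then to produce the log Calabi--Yau structure by adjunction. Since we want a divisor mapping onto the closed point $x$, the first step is to choose a divisorial log canonical place $E$ of $(X,B)$ whose center on $X$ is $x$; such a place exists because $x$ is a log canonical center, and one can produce it explicitly, e.g.\ by blowing up a minimal log canonical stratum lying over $x$ on a $\qq$-factorial dlt modification of $(X,B)$. Its log discrepancy is $0$, so $\mathcal{V}:=\{v_E\}$ is a finite set of divisorial valuations with log discrepancy in $[0,1)$, and Theorem~\ref{introthm:extraction} supplies a projective birational morphism $\pi\colon Y\rightarrow X$ extracting exactly the valuations of $\mathcal{V}$.

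Next I would read off the required geometric properties from the conclusions of Theorem~\ref{introthm:extraction}. By $(1)$ and $(3)$ the exceptional locus is purely divisorial and its divisors correspond to $\mathcal{V}$, so $\mathrm{Ex}(\pi)=E$ is a single prime divisor; as $E$ is a log canonical place with center $x$, it maps onto $x$. By $(4)$ there is an anti-effective divisor supported on $\mathrm{Ex}(\pi)=E$ that is ample over $X$; such a divisor has the form $-cE$ with $c>0$, whence $-E$ is ample over $X$.

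Now set $B_Y:=\pi^{-1}_* B$, the strict transform of $B$. Because $E$ is a log canonical place, the coefficient of $E$ in the crepant boundary is $1$, and since $E$ is the only exceptional divisor we obtain the crepant identity $K_Y+B_Y+E=\pi^*(K_X+B)$. As $(X,B)$ is log canonical and crepant birational pairs share the same log discrepancies, the pair $(Y,B_Y+E)$ is log canonical.

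Finally I would extract the log Calabi--Yau structure on $E$. Applying Koll\'ar's adjunction to the reduced divisor $E$ and passing to its normalization $E^\eta\rightarrow E$, one obtains an effective different $B^\eta$ with $(K_Y+B_Y+E)|_{E^\eta}=K_{E^\eta}+B^\eta$, and the log canonicity of $(Y,B_Y+E)$ forces $(E^\eta,B^\eta)$ to be log canonical. It then remains to check $\qq$-triviality of this restriction: choosing $m$ so that $m(K_X+B)$ is Cartier, the associated line bundle pulls back along $E^\eta\rightarrow Y\xrightarrow{\pi} X$ through $\Spec\kappa(x)$, since $E^\eta$ maps to the single point $x$, and is therefore trivial; combined with $K_Y+B_Y+E=\pi^*(K_X+B)$ this yields $K_{E^\eta}+B^\eta\sim_{\qq}0$, so $(E^\eta,B^\eta)$ is a log Calabi--Yau pair. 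The main obstacle is precisely this last step: because $E$ need not be $S_2$ and hence need not carry an slc structure, one cannot argue on $E$ directly but must pass to the normalization and control the different via adjunction; the remaining inputs (uniqueness of $E$, ampleness of $-E$, and crepancy) are immediate consequences of Theorem~\ref{introthm:extraction}.
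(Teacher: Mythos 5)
Your proposal follows the paper's own route step for step---apply Theorem~\ref{introthm:extraction} to the singleton $\mathcal{V}=\{v\}$, read off uniqueness of $E$ and ampleness of $-E$ from conclusions (1), (3) and (4), deduce log canonicity of $(Y,B_Y+E)$ from the crepant identity, and finish by $\qq$-triviality of the restriction to $E$ plus adjunction on the normalization (the paper quotes \cite{Hac14} for the latter). But there is a genuine gap at the very first step: you assert that a divisorial log canonical place with center exactly $x$ exists ``because $x$ is a log canonical center.'' The corollary only assumes $x\in(X,B)$ is a log canonical singularity; $x$ need not be a log canonical center. It may be a klt point ($B=0$, $X$ smooth), or it may lie on a positive-dimensional minimal log canonical center---for instance $x$ a point of a smooth curve $C$ with coefficient one on a smooth surface, where every exceptional divisor over $x$ has log discrepancy at least $1$. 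In those cases the valuation you need does not exist for $(X,B)$, and your explicit construction (blowing up a minimal lc stratum on a dlt modification) yields a divisor whose center is the image of that stratum, which equals $x$ only when $x$ is itself an lc center. So as written, your proof covers only that special case.

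The paper closes this gap with a one-line tie-breaking reduction: if $x$ is not a log canonical center, cut down the minimal log canonical center containing $x$ (or create one, in the klt case) by adding a suitable effective ample $\qq$-Cartier divisor $A$ through $x$, so that $x$ becomes a log canonical center of $(X,B+A)$, and run the whole argument for that pair. This is compatible with every conclusion you want: $(Y,(B+A)_Y+E)$ log canonical implies $(Y,B_Y+E)$ log canonical; ampleness of $-E$ over $X$ is unchanged; and the Calabi--Yau step now uses $K_Y+B_Y+A_Y+E\sim_{\qq,X}0$, with the restriction of $A_Y$ absorbed into the boundary $B^\eta$ produced by adjunction. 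With this reduction inserted, the rest of your argument is correct and matches the paper; indeed your triviality argument (the line bundle associated to $m(K_X+B+A)$ pulls back to $E^\eta$ through $\Spec\kappa(x)$, hence is trivial) is a slightly more explicit version of the paper's remark that the crepant pullback, being $\qq$-linearly trivial over $X$, restricts $\qq$-trivially to $E$.
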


Given a log canonical pair $(X,B)$ which is not klt, 
we can ask how many divisors we need to extract over $X$ 
to obtain a variety with klt singularities.
The following application states that such number equals the number of maximal log canonical centers of $(X,B)$, i.e., 
the number of log canonical centers which are maximal with respect to the inclusion.

\begin{introcor}\label{introcor:turning-klt}
Let $(X,B)$ be a log canonical pair which is not klt.
There exists a projective birational morphism $Y\rightarrow X$
so that $Y$ has klt singularities, 
and $\pi$ extracts exactly one divisor over each maximal log canonical center of $(X,B)$.
\end{introcor}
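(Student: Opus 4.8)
The plan is to realize every maximal log canonical center as the center of a valuation of log discrepancy zero, extract all of these simultaneously by Theorem~\ref{introthm:extraction}, and then read off the klt property of $Y$ from the log discrepancy formula, exploiting that each extracted divisor acquires a coefficient strictly below one.

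First I would record that $(X,B)$ has only finitely many log canonical centers, and that each of the maximal ones, say $W_1,\dots,W_k$, is the center on $X$ of a divisorial valuation $E_i$ with $a(E_i;X,B)=0$; such an $E_i$ has log discrepancy $0\in[0,1)$. Taking $\mathcal V=\{E_1,\dots,E_k\}$ and applying Theorem~\ref{introthm:extraction} produces $\pi\colon Y\to X$ and a boundary $B_Y$ satisfying $(1)$--$(8)$. By $(3)$ the exceptional divisors of $\pi$ are exactly $E_1,\dots,E_k$, and since $c_X(E_i)=W_i$ each maps onto a distinct maximal center; this is precisely the assertion that $\pi$ extracts exactly one divisor over each maximal log canonical center. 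Because the $W_i$ are centers of exceptional divisors they have codimension at least two in $X$, so $\pi$ is an isomorphism over $X\setminus\bigcup_jW_j$, i.e.\ $\pi^{-1}\bigl(\bigcup_jW_j\bigr)=\mathrm{Ex}(\pi)=\bigcup_iE_i$.

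It remains to show $Y$ is klt. Writing $B_Y=\pi^{-1}_*B+\sum_ic_iE_i$ by $(5)$, the log canonicity $(6)$ together with the absence of log canonical centers inside $\mathrm{Ex}(\pi)$ from $(8)$ forces $c_i<1$ for every $i$. As each $E_i$ is a log canonical place of $(X,B)$, the crepant identity $K_Y+\pi^{-1}_*B+\sum_iE_i=\pi^*(K_X+B)$ holds, whence $K_Y+B_Y=\pi^*(K_X+B)-\sum_i(1-c_i)E_i$ with all $1-c_i>0$. Hence for any divisor $F$ over $Y$,
\[
a(F;Y,0)=a(F;X,B)+\sum_i(1-c_i)\,\ord_F(E_i)+\ord_F(B_Y),
\]
a sum of three nonnegative terms.

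The crux is to exclude the vanishing $a(F;Y,0)=0$. If it occurred, then $a(F;X,B)=0$, so $c_X(F)$ would be a log canonical center of $(X,B)$ and therefore contained in some maximal center $W_j$; at the same time $\sum_i(1-c_i)\ord_F(E_i)=0$ with $1-c_i>0$ gives $\ord_F(E_i)=0$ for all $i$, that is $c_Y(F)\not\subseteq\mathrm{Ex}(\pi)$. Combined with $\pi^{-1}(\bigcup_jW_j)=\mathrm{Ex}(\pi)$ this yields $c_X(F)=\pi(c_Y(F))\not\subseteq\bigcup_jW_j$, contradicting $c_X(F)\subseteq W_j$. Thus $a(F;Y,0)>0$ for all $F$, so $Y$ has klt singularities. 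I expect the two points needing the most care to be the finiteness of the maximal log canonical centers (obtained by pushing forward the strata of a dlt modification) and the identity $\pi^{-1}(\bigcup_jW_j)=\mathrm{Ex}(\pi)$ (which rests on the codimension of the $W_j$ and on the connectedness of the fibers of $\pi$); granting these, the klt property falls out of the displayed identity.
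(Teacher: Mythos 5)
Your proposal takes the same route as the paper: pick one log canonical place over each of the finitely many maximal log canonical centers $W_1,\dots,W_k$, apply Theorem~\ref{introthm:extraction} to this set $\mathcal V$, and deduce kltness of the output from the fact that no log canonical center survives. The paper compresses this last step into ``by construction $(Y,B_Y)$ has klt singularities,'' which rests on the observation made at the end of the proof of Theorem~\ref{introthm:extraction} that the log canonical centers of $(Y,B_Y)$ are precisely the strict transforms of those log canonical centers of $(X,B)$ not contained in the image of ${\rm Ex}(\pi)$; since every log canonical center lies in some maximal one and $\pi({\rm Ex}(\pi))=\bigcup_j W_j$, none remain. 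Your discrepancy computation is a correctly spelled-out version of exactly this dichotomy, including the two supporting facts you flag (finiteness of centers via a dlt modification, and $\pi^{-1}\bigl(\bigcup_j W_j\bigr)={\rm Ex}(\pi)$ via connectedness of fibers over the normal base).

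One concrete caveat: your displayed identity is stated for $a(F;Y,0)$, which presupposes that $K_Y$ is $\qq$-Cartier. Theorem~\ref{introthm:extraction} does not provide this: $Y$ is an iterated relative ample model, is not claimed to be $\qq$-factorial, and only the \emph{exceptional} divisors are guaranteed $\qq$-Cartier, so $K_Y=\pi^*(K_X+B)-\pi^{-1}_*B-\sum_i E_i$ is $\qq$-Cartier only if $\pi^{-1}_*B$ is, which need not hold. Splitting $a(F;Y,B_Y)$ into $a(F;Y,0)-\ord_F(B_Y)$ is therefore not licensed. The harmless fix is to run your exclusion argument directly on $a(F;Y,B_Y)=a(F;X,B)+\sum_i(1-c_i)\ord_F(E_i)$: its vanishing is ruled out by exactly your dichotomy ($c_X(F)$ inside some $W_j$ versus $c_Y(F)\not\subseteq{\rm Ex}(\pi)$), giving that $(Y,B_Y)$ is klt --- which is what the paper itself proves and the intended reading of ``$Y$ has klt singularities.'' Separately, note that both you and the paper implicitly assume the maximal centers have codimension at least two: if some $W_j$ is a divisorial log canonical center (a coefficient-one component of $B$), the only log canonical place over it is $\ord_{W_j}$, which is not exceptional, and neither argument literally applies; this is an imprecision in the statement shared with the paper, not a defect specific to your proof.
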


Finally, we apply the main theorem to study degenerations of log canonical singularities.
The following statement is a generalization of the fact that 
klt singularities deform to cone over Fano type varieties~\cite{LX16}.

\begin{introthm}\label{introthm:degeneration}
Log canonical singularities degenerate to singularities whose normalizations are log canonical cones over log Calabi-Yau pairs.
Moreover, isolated $\qq$-factorial log canonical singularities degenerate to cones over slc Calabi-Yau pairs.
\end{introthm}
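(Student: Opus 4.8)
The plan is to produce the degeneration from the single divisor extracted in Corollary~\ref{introcor:lcy-extraction} together with the deformation to the normal cone attached to the corresponding divisorial valuation; this is the log canonical analogue of the Li--Xu argument in the klt case~\cite{LX16}. First I would pass to the local picture: let $x\in(X,B)$ be a log canonical singularity and write $R=\mathcal{O}_{X,x}$. Corollary~\ref{introcor:lcy-extraction} furnishes a birational morphism $\pi\colon Y\to X$ extracting a unique prime divisor $E$ over $x$ with $-E$ ample over $X$, such that $(Y,B_Y+E)$ is log canonical and the normalization $(E^\eta,B^\eta)$ is log Calabi--Yau. Because $\pi$ is an isomorphism away from $x$, $-E$ is $\pi$-ample, and $\pi_*\mathcal{O}_Y=\mathcal{O}_X$, the restriction $(-E)|_E$ is ample and
\[
S \;=\; \bigoplus_{m\ge 0}\pi_*\mathcal{O}_Y(-mE)
\]
is the coordinate ring of the affine cone $C:=C\big(E,(-E)|_E\big)$, with $\operatorname{Proj}_X S\cong Y$.

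Next I would run the degeneration attached to the valuation $v=\ord_E$. It defines a filtration $\mathcal{F}^m R=\{\,f\in R: v(f)\ge m\,\}$, and the extended Rees algebra $\bigoplus_{m\in\zz}\mathcal{F}^m R\cdot t^{-m}$ (with $\mathcal{F}^m R=R$ for $m\le 0$) is flat over $\mathbb{A}^1_t$ since $t$ is a nonzerodivisor. Its general fibre is $X$ and its special fibre is $\Spec(\operatorname{gr}_v R)$. The key point is to identify $\operatorname{gr}_v R$ with $S$, that is, with the cone $C$: pushing forward the exact sequences $0\to\mathcal{O}_Y(-(m+1)E)\to\mathcal{O}_Y(-mE)\to\mathcal{O}_E(-mE)\to 0$ expresses each graded piece $\mathcal{F}^m R/\mathcal{F}^{m+1}R$ in terms of $H^0\!\big(E,\mathcal{O}_E(-mE|_E)\big)$ up to the higher direct image $R^1\pi_*\mathcal{O}_Y(-(m+1)E)$, so the ampleness of $-E$ and the requisite vanishing are precisely what make the special fibre the cone $C$.

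Finally I would analyze $C$. Its normalization is the affine cone over $\big(E^\eta,(-E)|_{E^\eta}\big)$, and since $(E^\eta,B^\eta)$ is log Calabi--Yau, the standard discrepancy computation on cones---a cone over a log canonical pair with $K+B\sim_\qq 0$ is log canonical for any polarization---shows this normalization is a log canonical cone over the log Calabi--Yau pair $(E^\eta,B^\eta)$, which gives the first assertion. For the second assertion, when $x$ is an isolated $\qq$-factorial log canonical singularity the divisor $E$ is demi-normal---so the failure of $S_2$ that can occur in general is avoided---and adjunction on $(Y,B_Y+E)$ endows $(E,B_E)$ directly with an slc Calabi--Yau structure; hence no normalization is needed and $C$ itself is the cone over the slc Calabi--Yau pair $(E,B_E)$. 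The main obstacle I expect is the identification of the special fibre with $C$---controlling $\operatorname{gr}_v R$, equivalently the higher direct images above---and then verifying log canonicity of the possibly non-normal cone via inversion of adjunction from $(Y,B_Y+E)$; this, rather than flatness, is the technical heart of the argument.
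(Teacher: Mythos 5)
Your proposal follows the paper's proof almost step for step: extract the single log canonical place via Corollary~\ref{introcor:lcy-extraction}, degenerate along the extended Rees algebra of $v=\operatorname{ord}_E$, identify the special fibre with the cone over $E$ using the restriction sequences $0\to\mathcal{O}_Y(-(m+1)E)\to\mathcal{O}_Y(-mE)\to\mathcal{O}_E(-mE)\to 0$ together with a vanishing of $R^1\pi_*$ (the paper supplies exactly this via Fujino's relative vanishing for log canonical pairs \cite[Theorem 1.7]{Fuj14}, using that $-(m+1)E-(K_Y+B_Y)\sim_{\qq}-mE$ is ample over $X$ and $(Y,B_Y)$ is lc), and then read off log canonicity of the normalized cone from the log Calabi--Yau structure on $(E^\eta,B^\eta)$ as in \cite{Kol13}. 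So the route is the same; what needs repair is one identification that is false as written. The algebra $S=\bigoplus_{m\ge 0}\pi_*\mathcal{O}_Y(-mE)$ is \emph{not} the coordinate ring of the affine cone $C(E,(-E)|_E)$: its $\operatorname{Proj}$ over $X$ is $Y$, but its $\operatorname{Spec}$ is a Rees-type object, not the cone, unless $X$ is already a cone. The cone is the \emph{associated graded} $\operatorname{gr}_v R=\bigoplus_m \mathcal{F}^m R/\mathcal{F}^{m+1}R$, and your later sentence ``identify $\operatorname{gr}_v R$ with $S$'' repeats the conflation --- although the mechanism you then describe (graded pieces computed as $H^0(E,\mathcal{O}_E(-mE|_E))$ once the $R^1$ dies) is precisely the correct proof that $\operatorname{gr}_v R\cong C$, so the slip is self-correcting once the statement is fixed.

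Two points you gloss over are handled explicitly in the paper. First, $E$ is only $\qq$-Cartier, so $\mathcal{O}_E(-mE)$ and the restriction sequence make sense only for $m$ divisible by the Cartier index; the paper works with the Veronese subalgebra $A^{(c)}$ for $cE$ Cartier and identifies the special fibre $X_0$ as an \emph{orbifold} cone via the cyclic cover $X_0^{(c)}\to X_0$, ramified where $E$ fails to be Cartier --- your uniform indexing over all $m$ needs this adjustment. Second, in the isolated $\qq$-factorial case you simply assert that $E$ is demi-normal; the paper derives it: $(Y,B_Y)$ is then $\qq$-factorial klt, hence $Y$ is Cohen--Macaulay, hence $E$ is Cohen--Macaulay and in particular $S_2$, from which semi-log canonicity follows by adjunction (your adjunction step giving $(K_Y+B_Y+E)|_E\sim_{\qq}0$ is the right source of the Calabi--Yau structure). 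Neither issue changes the architecture of your argument, but both must be filled in for the proof to stand.
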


\section{Preliminaries}

We work over an algebraically closed field of characteristic zero.

\begin{definition}{\em 
A {\em log pair} is a pair $(X,B)$ consisting of a normal quasi-projective variety $X$ and an effective $\qq$-divisor $B$ so that $K_X+B$ is $\qq$-Cartier.
Given a projective birational morphism $\pi \colon Y \rightarrow X$ from a normal quasi-projective variety $Y$,
and a prime divisor $E$ on $Y$, 
we define the {\em log discrepancy} of $(X,B)$ at $E$ to be
\[
a_E(X,B):=1-{\rm coeff}_E(\pi^*(K_X+B)).
\]
We say that a log discrepancy is exceptional if the center of $E$ on $X$ is not a divisor.
We say that a log pair $(X,B)$ is {\em  terminal} (resp. {\em canonical}) if all its exceptional log discrepancies are greater than one (resp. greater than or equal to one).
We say that a log pair $(X,B)$ is {\em Kawamata log terminal} (resp. {\em log canonical}) if all its log discrepancies are positive (resp. non-negative).
We may use the usual abbreviation lc (resp. klt) for log canonical (resp. Kawamata log terminal).
}
\end{definition}

\begin{definition}{\em 
Let $(X,B)$ be a log canonical pair.
A divisor $E$ over $X$ is said to be a {\em log canonical place} (resp. {\em canonical place}) if $a_E(X,B)=0$ (resp. $a_E(X,B)=1$).
The image of a log canonical place (resp. canonical place)
in $X$ is said to be a {\em log canonical center} (resp. {\em canonical center}).
}
\end{definition}

\begin{definition}
{\em
A log pair $(X,B)$ is said to be {\em divisorially log terminal} (or dlt for short) if there exists an open set $U\subset X$ satisfying the following conditions:
\begin{enumerate}
    \item the coefficients of $B$ are at most one,
    \item $U$ is smooth and $B|_U$ has simple normal crossing, and
    \item any log canonical center of $(X,B)$ intersect $U$ and is given by strata of $\lfloor B \rfloor$.
\end{enumerate}
Given a log canonical pair $(X,B)$, we say that $\pi \colon Y\rightarrow X$ is a {\em $\qq$-factorial dlt modification}
if $Y$ is $\qq$-factorial, $\pi$ only extract log canonical places, and $\pi^*(K_X+B)=K_Y+B_Y$ is a dlt pair.
The existence of $\qq$-factorial dlt modifications for log canonical pairs is well-known (see, e.g.~\cite{KK10}).
}
\end{definition}

\begin{definition}
{\em 
A demi-normal scheme is a $S_2$ scheme whose codimension one points are either regular or nodes.
Let $\pi\colon X^\eta \rightarrow X$ be the normalization morphism.
The conductor ideal $\Hom_X(\pi_*\mathcal{O}_{X^\eta},\mathcal{O}_X)$ is the largest ideal sheaf on $X$ which is also an ideal sheaf on $X^\eta$.
Therefore, it defines two subschemes $D$ and $D^\eta$ on $X$ and $X^\eta$ respectively which we call the {\em conductor subschemes}.

Let $X$ be a demi-normal scheme.
Denote by $D$ its conductor.
Let $B$ be an effective divisor on $X$ whose support does not contain any component of the conductor $D$.
Let $\pi \colon X^\eta \rightarrow X$ be the normalization morphism and $B^\eta$ the divisorial part of $\pi^{-1}(B)$.
We say that $(X,B)$ is {\em semi log canonical} if $K_X+B$ is $\qq$-Cartier and $K_{X^\eta}+B^\eta+D^\eta$ is log canonical.
}
\end{definition}

\section{Proof of the main Theorem}

\begin{proof}[Proof of Theorem~\ref{introthm:extraction}]
We will construct the morphism $Y\rightarrow X$ inductively 
by extracting the non-canonical places one-by-one.
We denote by $\mathcal{V}=\{v_1,\dots,v_k\}$
the finite set of divisorial valuations.

Let $Y_1 \rightarrow X$ be a $\qq$-factorial dlt modification of $(X,B)$.
We may assume that all the divisorial valuations of $\mathcal{V}$ which have log discrepancy zero are indeed divisors on $Y_1$.
Since $Y_1$ is klt and $\qq$-factorial, we may apply~\cite[Corollary 1.4.3]{BCHM10} to further extract all divisorial valuations of $\mathcal{V}$ with positive log discrepancy.
Hence, we may assume $Y_1\rightarrow X$ extracts all the divisorial valuations of $\mathcal{V}$.
Moreover, we have that the log pull-back of $K_X+B$ to $Y_1$ is $\qq$-factorial and log canonical.
We further replace $Y_1$ with a $\qq$-factorial dlt modification for the log pull-back of $K_X+B$ to $Y_1$.
Let $\pi_1 \colon Y_1\rightarrow X$ be the projective birational morphism.
We can write 
\[
\pi_1^*(K_X+B)=K_{Y_1}+B_{Y_1}+E_{Y_1} \sim_{\qq,X} 0,
\]
where $B_{Y_1}$ is the strict transform of $B$ on $Y_1$, and 
$E_{Y_1}$ is an effective divisor supported on the exceptional divisors.
By construction, the above pair is $\qq$-factorial and dlt.
Let $F_{Y_1}$ be the center of $v_1$ on $Y_1$.
We may run a minimal model program for
\[
K_{Y_1}+B_{Y_1}+E_{Y_1} -\epsilon_1 F_{Y_1} \sim_{\qq, X} -\epsilon_1 F_{Y_1}
\]
over $X$ with scaling of an ample divisor.
The existence of such a minimal model program follows from~\cite{BCHM10}.
By~\cite[Theorem 1.6]{HX13}, this minimal model program terminates with a $\qq$-factorial good minimal model $Y_1'$.
Let $K_{Y'_1}+B_{Y'_1}+E_{Y'_1}-\epsilon_1 F_{Y'_1}$ be the relatively semiample divisor over $X$.

We claim that the MMP $Y\dashrightarrow Y_1'$ over $X$
does not contract $F_{Y_1}$.
We proceed by contradiction.
Assume that the minimal model program $Y\dashrightarrow Y_1'$ over $X$ contract $F_{Y_1}$.
By monotonicity of log discrepancies under the minimal model program, we obtain
\[
\alpha:=a_{v_1}(X,B)=a_{v_1}(K_{Y'_1}+B_{Y'_1}+E_{Y'_1}) >
a_{v_1}(K_{Y_1}+B_{Y_1}+E_{Y_1} -\epsilon_1 E_{Y_1})= \alpha+\epsilon_1,
\]
leading to a contradiction.
Hence, the center of $F_1$ on $Y'_1$ is a divisor.

Let $X_1\rightarrow X$ be the relative ample model for
\[
K_{Y_1}+B_{Y_1}+E_{Y_1} -\epsilon_1 F_{Y_1}
\]
over $X$. 
By construction, the morphism $Y_1'\rightarrow X_1$ is $F_{Y_1}$-trivial.
Hence, if $Y_1'\rightarrow X_1$ contract $F_{Y'_1}$,
then we obtain 
\[
\alpha =a_{E_1}(K_{X}+B) = a_{E_1}(K_{Y_1}+B_{Y_1}+E_{Y_1}-\epsilon_1 F_{Y_1}) =\alpha+\epsilon_1
\]
leading to a contradiction.
Thus, we conclude that the strict transform of $F_{Y_1'}$ on $X_1$ is a divisor.
We call such divisor $F_{X_1}$.
By construction, $F_{X_1}$ is $\qq$-Cartier
and $-F_{X_1}$ is ample over $X$.
In particular, the exceptional locus of $\phi_1 \colon X_1 \rightarrow X$ is purely divisorial
and extract a unique divisor $F_{X_1}$.
Hence, we have that 
\[
K_{X_1}+B_{X_1}+(1-\alpha) F_{X_1} \sim_{\qq,X} 0,
\]
is log canonical, and 
\[
K_{X_1}+B_{X_1}+(1-\alpha-\epsilon_1)F_{X_1} 
\]
is log canonical and ample over $X$.
It is clear that all the log canonical centers
of $(X_1,B_{X_1}+(1-\alpha) F_{X_1})$
are either the strict transform of a log canonical center
of $(X,B)$ or are contained in the support of $F_{X_1}$.
Hence, the log canonical centers of
$(X_1,B_{X_1}+(1-\alpha-\epsilon_1)F_{X_1})$ 
are the strict transform of the log canonical centers of $(X,B)$.
We define $B_1:=B_{X_1}+(1-\alpha-\epsilon_1)F_{X_1}$.
Thus, the projective birational morphism $X_1\rightarrow X$ 
and the pair $(X_1,B_1)$ satisfy the conditions of the statement 
for $\mathcal{V}_1:=\{v_1\}$.

Proceeding inductively, we can find a sequence of birational extractions 
\[
X =: X_0 \leftarrow X_1\leftarrow X_2\leftarrow \dots\leftarrow X_k,
\]
where each $X_i\rightarrow X_{i-1}$, with $i\in \{1,\dots, k\}$, only extracts the divisor corresponding to $v_i$.
Hence, we may define $Y:=X_k$.
Observe that the centers of $v_1,\dots,v_k$ on $Y$ are $\qq$-Cartier.
We denote such centers by $F_i$.
Hence, the morphism $Y\rightarrow X$ satisfies the conditions
$(1)$-$(3)$ of the statement.
It suffices to prove that $Y\rightarrow X$ admits an anti-effective divisor supported on the exceptional locus which is ample over the base.
By construction, we know that for
each $i\in \{1,\dots,k\}$
the divisor $-F_{X_i}$ (the center of $v_i$ on $X_i$ is ample over $X_{i-1}$.
Then, we can choose
\[
1\gg \epsilon_1 \gg \epsilon_2 \gg \dots \gg \epsilon_k >0,
\]
so that the divisor
\[
-\sum_{i=1}^k \epsilon_i F_i
\]
has support equal to ${\rm Ex}(\pi)$ and is ample over $X$.
Let $(Y,\Delta_Y)$ be the log pull-back of $(X,B)$.
We define the divisor
\[
B_Y:=\Delta_Y - \sum_{i=1}^k \epsilon_i F_i.
\]
Thus, the pair $K_Y+B_Y$ is log canonical 
and ample over $X$.
Moreover, no log canonical center of $K_Y+B_Y$ is contained in the exceptional locus of $Y\rightarrow X$.
In particular, all the log canonical centers of $(Y,B_Y)$ are strict transforms of the log canonical centers of $(X,B)$
which are not contained in the image of the exceptional locus.
\end{proof}

\begin{proof}[Proof of Corollary~\ref{introcor:extraction}]
Since no canonical center of $(X,B)$ contains a log canonical center of $(X,B)$, we can add an effective ample divisor $A$ which contains all the canonical centers and doesn't contain any log canonical center.
In particular, $(X,B+A)$ is log canonical, and all the canonical centers of $(X,B)$ are non-canonical centers of $(X,B+A)$.
Hence, the finite set $\mathcal{V}$ of divisorial valuations with log discrepancy in $[0,1]$ for $(X,B)$
becomes a set of divisorial valuations with log discrepancy
in $[0,1)$ for $(X,B+A)$.
Fianlly, we can apply Theorem~\ref{introthm:extraction} to the log pair $(X,B+A)$ to extract all the valuations corresponding to elements of $\mathcal{V}$.
\end{proof}

\begin{proof}[Proof of Propostion~\ref{introprop:comparison}]
By Theorem~\ref{introthm:extraction}, we know that there exists a divisor $B_{Y_2}$ on $Y_2$ so that $(Y_2,B_{Y_2})$ is an ample log canonical model over $X$.
Moreover, the log pull-back of $(X,B)$ to $Y_2$ and $K_{Y_2}+B_{Y_2}$ differ only at the exceptional divisors of $Y_2\rightarrow X$.
Hence, if we let $B_{Y_1}$ to be the strict transform of $B_{Y_2}$ on $Y_1$, we have that $(Y_1,B_{Y_1})$ is a log canonical pair.
We can run a minimal model program for $K_{Y_1}+B_{Y_1}$ with scaling of an ample divisor over $X$.
Such minimal model program contract no divisors, so it is a sequence of flops relative to $X$ for the log pull-back of $(X,B)$.
It terminates with a good minimal model over $X$
whose ample model is $(Y_2,B_{Y_2})$.
The morphism to the ample model is a small contraction to $Y_2$ over $X$.
Thus, we achieved to factor the morphism $Y_1\dashrightarrow Y_2$
as a sequence of flops over $X$ and possibly a small contraction over $X$.
If $\rho(Y_1/X)=\rho(Y_2/X)$, then the minimal model and the ample model agree, so there is no such small contraction.
\end{proof}

\begin{proof}[Proof of Corollary~\ref{introcor:existence-ample-anti-effective}]
Let $\mathcal{V}$ be the set of divisorial valuations extracted by $\pi$.
Applying Theorem~\ref{introthm:extraction} to the set of valuations $\mathcal{V}$ of the log canonical pair $(X,B)$,
we get such small projective birational morphism $\pi'\colon Y'\rightarrow X$.
\end{proof}

\begin{proof}[Proof of Corollary~\ref{introcor:lcy-extraction}]
Let $x\in (X,B)$ be a log canonical singularity.
Without loss of assumptions, we may assume that $x$
is a log canonical center.
Otherwise, we may cut down the minimal log canonical center containing $x$ by adding a $\qq$-Cartier ample divisor.
Hence, there exists a divisorial valuation $v$ whose center on $X$ is $x$.
By Theorem~\ref{introthm:extraction} applied to the set $\mathcal{V}=\{ v\}$, we conclude that there exists a projective birational morphism $\pi \colon Y\rightarrow X$ that extracts a unique divisor $E$ over $X$ which is anti-ample over $X$, and $(Y,B_Y+E)$ is log canonical.
Here, we are denoting by $B_Y$ the strict transform of $B$ on $Y$.
By construction $K_Y+B_Y+E\sim_{\qq,X} 0$, hence the restriction
$K_Y+B_Y+E|_E$ is $\qq$-linearly trivial.
The last statement follows from the theory of adjunction to log canonical places~\cite{Hac14}.
\end{proof}

\begin{proof}[Proof of Corollary~\ref{introcor:turning-klt}]
For each maximal log canonical center $Z_i$ of $(X,B)$ we may find a log canonical place that maps to it.
We denote by $v_i$ the corresponding divisorial valuation.
Since $(X,B)$ has finitely many log canonical centers, then
we may find a finite set of divisorial valuations $v_1,\dots,v_k$ each of them corresponding to a single maximal log canonical center of $(X,B)$.
Let $\pi \colon Y \rightarrow X$ be the projective birational morphism constructed by Theorem~\ref{introthm:extraction}
applied to the finite set $\mathcal{V}$.
Then, by construction $(Y,B_Y)$ has klt singularities.
\end{proof}

\begin{proof}[Proof of Theorem~\ref{introthm:degeneration}]
By Corollary~\ref{introcor:lcy-extraction} there exists a projective birational morphism $\pi \colon Y \rightarrow X$ which extracts a unique divisor $E$ mapping onto $x$, so that
$-E$ is ample over $X$, and $(Y,B_Y+E)$ is log canonical.
Moreover, if $E^\eta \rightarrow E$ is the normalization of $E$,
then there exists a boundary $B^\eta$ so that $(E^\eta,B^\eta)$ is a log Calabi-Yau pair.
We denote by $v={\rm ord}_E$.
Consider the graded ring 
\[A:=\bigoplus_{k=0}^\infty a_k(v)/a_{k-1}(v),
\]
where 
\[
a_k(v):=\{ f\in \mathbb{K}(X) \mid v(f)\geq k\}
\]
is an ideal sheaf on $X$.
We can also consider the extended Rees algebra of the above ring
\[
\mathcal{R}=\bigoplus_{k\in \zz} a_k(v)t^{-k}\subset A[t,t^{-1}].
\]
The affine variety $\mathcal{X}:={\rm Spec}(\mathcal{R})$ admits a flat morphism to $\mathbb{A}^1$ and its general fiber is isomorphic to $X$.
Moreover, the central fiber is isomorphic to $X_0:={\rm Spec}(A)$.
We claim that ${\rm Spec}(A)$ is isomorphic to the orbifold cone over $E$
with respect to the $\qq$-polarization $-E|_E$.
Since $E$ may be non-normal, the orbifold cone may be non-normal as well.
Let $c$ be a natural number so that $cE$ is Cartier on $Y$. By the restriction exact sequence, we have
\[
0\rightarrow \mathcal{O}_{Y}(-(c+1)E)\rightarrow \mathcal{O}_{Y}(-cE)\rightarrow \mathcal{O}_{E}(-cE)\rightarrow 0,
\]
Observe that $-(c+1)E-K_Y-B_Y\sim_\qq -cE$
which is ample over $X$ and $(Y,B_Y)$ has log canonical singularities.
Hence, we may apply a relative version of Kawamata-Viehweg vanishing for log canonical pairs~\cite[Theorem 1.7]{Fuj14}, to deduce that
\[
R^1\pi_*\mathcal{O}_X(-(c+1)E))\simeq
H^1(Y,\mathcal{O}_Y(-(c+1)E))=0.
\]
So, we have an isomorphism
\[
H^0(E,\mathcal{O}(-cE|_{E})) \simeq H^0(\mathcal{O}_{Y}(-cE))/H^0(\mathcal{O}_{Y}(-(c+1)E)) \simeq a_{c}(v)/a_{c+1}(v).
\]
Then $A^{(c)}=\bigoplus_{k=0}^\infty a_{ck}(v)/a_{ck+1}(v)$ is isomorphic to the cone
over $E$ with respect to the polarization $-cE|_{E}$. 
We define
\[
X_0^{(c)}:={\rm Spec}(A^{(c)}).
\]
Note that we have a cyclic $c$-th cover $X_0^{(c)}\rightarrow X_0$.
This cyclic cover ramifies over the points of $E$ on which $E$ is not a Cartier divisor on $Y$.
This proves the claim.
On the other hand, the normalization of $X_0$ is the cone over 
$E^\nu$ with respect to the $\qq$-polarization given by the pull-back of $-E|_E$ to $E^\nu$.
Since $(E^\nu,B^\nu)$ is a log canonical log Calabi-Yau pair, 
we conclude that the cone is a log canonical cone 
over a log Calabi-Yau pair~\cite{Kol13}.

Finally, if $x\in (X,B)$ is a $\qq$-factorial isolated singularity,
we conclude that $(Y,B_Y)$ is a $\qq$-factorial klt pair.
In particular, $Y$ is Cohen-Macaulay, so $E$ is Cohen-Macaulay as well.
We deduce that $E$ is a $S_2$ scheme, so it is semi-log canonical.
Hence, the central fiber of the above degeneration is a cone over a semi-log canonical log Calabi-Yau pair.
\end{proof}

\begin{bibdiv}
\begin{biblist}

\bib{BCHM10}{article}{
   author={Birkar, Caucher},
   author={Cascini, Paolo},
   author={Hacon, Christopher D.},
   author={McKernan, James},
   title={Existence of minimal models for varieties of log general type},
   journal={J. Amer. Math. Soc.},
   volume={23},
   date={2010},
   number={2},
   pages={405--468},
   issn={0894-0347},
   review={\MR{2601039}},
   doi={10.1090/S0894-0347-09-00649-3},
}

\bib{Bir12}{article}{
   author={Birkar, Caucher},
   title={Existence of log canonical flips and a special LMMP},
   journal={Publ. Math. Inst. Hautes \'{E}tudes Sci.},
   volume={115},
   date={2012},
   pages={325--368},
   issn={0073-8301},
   review={\MR{2929730}},
   doi={10.1007/s10240-012-0039-5},
}

\bib{Fuj14}{article}{
   author={Fujino, Osamu},
   title={Fundamental theorems for semi log canonical pairs},
   journal={Algebr. Geom.},
   volume={1},
   date={2014},
   number={2},
   pages={194--228},
   review={\MR{3238112}},
   doi={10.14231/AG-2014-011},
}

\bib{Hac14}{article}{
   author={Hacon, Christopher D.},
   title={On the log canonical inversion of adjunction},
   journal={Proc. Edinb. Math. Soc. (2)},
   volume={57},
   date={2014},
   number={1},
   pages={139--143},
   issn={0013-0915},
   review={\MR{3165017}},
   doi={10.1017/S0013091513000837},
}

\bib{HX13}{article}{
   author={Hacon, Christopher D.},
   author={Xu, Chenyang},
   title={Existence of log canonical closures},
   journal={Invent. Math.},
   volume={192},
   date={2013},
   number={1},
   pages={161--195},
   issn={0020-9910},
   review={\MR{3032329}},
   doi={10.1007/s00222-012-0409-0},
}

\bib{KK10}{article}{
   author={Koll\'{a}r, J\'{a}nos},
   author={Kov\'{a}cs, S\'{a}ndor J.},
   title={Log canonical singularities are Du Bois},
   journal={J. Amer. Math. Soc.},
   volume={23},
   date={2010},
   number={3},
   pages={791--813},
   issn={0894-0347},
   review={\MR{2629988}},
   doi={10.1090/S0894-0347-10-00663-6},
}

\bib{Kol92}{collection}{
   title={Adjunction and discrepancies in "Flips and abundance for algebraic threefolds"},
   note={Papers from the Second Summer Seminar on Algebraic Geometry held at
   the University of Utah, Salt Lake City, Utah, August 1991;
   Ast\'{e}risque No. 211 (1992) (1992)},
   publisher={Soci\'{e}t\'{e} Math\'{e}matique de France, Paris},
   date={1992},
   pages={1--258},
   issn={0303-1179},
   review={\MR{1225842}},
}

\bib{Kol13}{book}{
   author={Koll\'{a}r, J\'{a}nos},
   title={Singularities of the minimal model program},
   series={Cambridge Tracts in Mathematics},
   volume={200},
   note={With a collaboration of S\'{a}ndor Kov\'{a}cs},
   publisher={Cambridge University Press, Cambridge},
   date={2013},
   pages={x+370},
   isbn={978-1-107-03534-8},
   review={\MR{3057950}},
   doi={10.1017/CBO9781139547895},
}

\bib{LX16}{misc}{
  author ={Li, Chi}
  author = {Xu, Chenyang},
  title={Stability of Valuations and Koll\'ar Components},
  year = {2016},
  note = {https://arxiv.org/abs/1604.05398},
}

\bib{Xu14}{article}{
   author={Xu, Chenyang},
   title={Finiteness of algebraic fundamental groups},
   journal={Compos. Math.},
   volume={150},
   date={2014},
   number={3},
   pages={409--414},
   issn={0010-437X},
   review={\MR{3187625}},
   doi={10.1112/S0010437X13007562},
}
	
\end{biblist}
\end{bibdiv}
\end{document}